\definecolor{webred}{rgb}{0.75,0,0}
\definecolor{webgreen}{rgb}{0,0.75,0}
\definecolor{refkey}{gray}{0.75}
\numberwithin{equation}{section}
\newtheorem{theo}{Theorem}[section]
\newtheorem{lem}{Lemma}[section]
\newtheorem{Def}[theo]{Definition}
\theoremstyle{remark}
\newtheorem{rem}{Remark}[section]
\newcommand{\ep}{\varepsilon}
\def\R{{\mathbb{R}}}
\def\d{\displaystyle}
\def\e{{\varepsilon}}
\def\p{\partial}
\def\G{G_1}
\def\GG{\tilde{G}_1}
\def\H{G_2}
\def\HH{\tilde{G}_2}
\date{}
\subjclass[2010]{35L71,  35B44}
\keywords{blow-up, critical curve, lifespan, nonlinear wave equations, semilinear weakly coupled system, scale-invariant damping, time-derivative nonlinearity.}
\begin{document}

\title[New blow-up result for  the coupled wave equations  - Invariant case]{New blow-up result for  the weakly coupled wave equations  with a scale-invariant damping and time derivative nonlinearity}
\author[M. Hamouda  and M. A. Hamza]{Makram Hamouda$^{1}$ and Mohamed Ali Hamza$^{1}$}
\address{$^{1}$ Basic Sciences Department, Deanship of Preparatory Year and Supporting Studies, P. O. Box 1982, Imam Abdulrahman Bin Faisal University, Dammam, KSA.}

\medskip

\email{mmhamouda@iau.edu.sa (M. Hamouda)} 
\email{mahamza@iau.edu.sa (M.A. Hamza)}

\pagestyle{plain}


\maketitle
\begin{abstract}

We consider in this article the weakly coupled system of wave equations  in the \textit{scale-invariant case} and with time-derivative nonlinearities. Under the usual assumption of small initial data, we obtain an improvement of the delimitation of the blow-up region by obtaining a new candidate for the critical curve. More precisely, we enhance the results obtained in \cite{Palmieri} for the system under consideration in the present work. We believe that our result is optimal in the sense that beyond the   blow-up region obtained here we may conjecture the global existence of the solution. 

\end{abstract}


\section{Introduction}
\par\quad

We consider in this article the following weakly coupled system of semilinear wave equations with damping in the {\it scale-invariant} case and nonlinearities of derivative type, namely
\begin{align}\label{G-sys}
\begin{cases}\d u_{tt}-\Delta u +\frac{\mu_1}{1+t}u_t=|\partial_t v|^p, &  x\in \mathbb{R}^N, \ t>0, \vspace{.1cm}\\
\d v_{tt}-\Delta v +\frac{\mu_2}{1+t}v_t=|\partial_t u|^q, &  x\in \R^N, \ t>0,\\
u(x,0)=\e f_1(x), \ \ v(x,0)=\e f_2(x), & x\in \mathbb{R}^N, \\ u_t(x,0)=\e g_1(x), \ \  v_t(x,0)=\varepsilon g_2(x), & x\in \mathbb{R}^N,
\end{cases}
\end{align} 
where $\mu_1,\mu_2$ are two positive constants.  Moreover, the parameter $\e$ is a positive number describing the size of the initial
data,  and $f_1,f_2,g_1$ and $g_2$ are positive functions which are compactly supported on  $B_{\R^N}(0,R), R>0$.\\
Throughout this article, we suppose that $p, q>1$.

Note that the situation, in the scale-invariant context, for coupled damped wave equations  is not a simple generalization of the case of a one damped wave equation. However,  we will point out here the recent improvements in this direction in the purpose to show the difference between the two settings (a one equation and a system). First, we recall  the Glassey conjecture which asserts that the critical power $p_G$ should be given by
\begin{equation}\label{Glassey}
p_G=p_G(N):=1+\frac{2}{N-1}.
\end{equation}
The above critical value, $p_G$, gives rise to two regions for the power $p$ ensuring the global existence (for $p>p_G$) or the nonexistence (for $p \le p_G$) of a global small data solution; see e.g. \cite{Hidano1,Hidano2,John1,Rammaha,Sideris,Tzvetkov,Zhou1}.\\
We recall here the case of one damped equation with only one time-derivative nonlinearity, namely 
\begin{equation}
\label{T-sys-bis-old}
\left\{
\begin{array}{l}
\d u_{tt}-\Delta u+\frac{\mu}{1+t}u_t=|u_t|^p, 
\quad \mbox{in}\ \R^N\times[0,\infty),\\
u(x,0)=\e f(x),\ u_t(x,0)=\e g(x), \quad  x\in\R^N.
\end{array}
\right.
\end{equation}
In the context of one damped equation, Lai and Takamura prove in \cite{LT2} a blow-up result for the solution of \eqref{T-sys-bis-old} 
and they give an upper bound of the lifespan. We stress the fact that in this case there is no restriction  for $\mu$  in the blow-up region for $p$, namely $p \in (1, p_G(N+2\mu)]$. Recently, Palmieri and Tu  proved in \cite{Palmieri}, among many other interesting results, a more accurate blow-up interval for $p$ in relationship with the solution of  \eqref{T-sys-bis-old} with a mass term. More precisely, it is proven that the solution of this problem blows up in finite time for $p \in (1, p_G(N+\sigma(\mu))]$ where 
\begin{equation}\label{sigma}
\sigma(\mu)=\left\{
\begin{array}{lll}
2 \mu & \textnormal{if} &\mu \in [0,1),\\
2 & \text{if} &\mu \in [1,2),\\
\mu & \text{if} &\mu \ge 2.
\end{array}
\right.
\end{equation}
Of course the problems studied  in \cite{Palmieri} are more general.\\

Thanks to a better understanding of the corresponding linear problem to (\ref{T-sys-bis-old}), the blow-up interval, $p \in (1, p_G(N+\sigma(\mu))]$ ($\sigma(\mu)$ is given by \eqref{sigma}),  obtained in \cite{Palmieri} and previously in \cite{LT2}, is improved in \cite{Our2}  to  reach the interval $p \in (1, p_G(N+\mu)]$, for $\mu \in (0,2)$. Our result for (\ref{T-sys-bis-old}) coincides  with the one in \cite{Palmieri}, for $\mu \ge 2$. We continue in this work to take advantage of the same technique developed in \cite{Our2} to obtain an upgrade of the blow-up region for the solution of  \eqref{G-sys}.\\

Now, going back to the system \eqref{G-sys} and let $\mu_1 = \mu_2 = 0$. Then, the system \eqref{G-sys} describes the coupling between two wave equations with time derivative nonlinearity. More precisely,  \eqref{G-sys} yields 
\begin{align}\label{G-sys-0}
\begin{cases} u_{tt}-\Delta u =|\partial_t v|^p, &  x\in \mathbb{R}^N, \ t>0,\\
 v_{tt}-\Delta v =|\partial_t u|^q, &  x\in \R^N, \ t>0,\\
u(x,0)=\e f_1(x), \ \ v(x,0)=\e f_2(x), & x\in \mathbb{R}^N, \\ u_t(x,0)=\e g_1(x), \ \  v_t(x,0)=\varepsilon g_2(x), & x\in \mathbb{R}^N.
\end{cases}
\end{align}
The study of the existence or nonexistence of  solutions to \eqref{G-sys-0} has been the subject of several works in the literature. First, let us point out the blow-up results obtained by Deng \cite{Deng} and Xu \cite{Xu}. For a family of coupled systems larger than the one studied in the present work, Ikeda et al. \cite{Ikeda-sys} have stated and proved several nice results related to different combinations of the nonlinearities in the coupled systems under examination. The context of the present article (for the damped case) is limited to the nonlinearity of derivative type (as in \eqref{G-sys}), however, other nonlinearities will be considered elsewhere; see for instance \cite{Dao-Reissig,Ikeda-sys,Palmieri1,Palmieri-Takamura-arx}. On the other hand, for the existence of solutions to  \eqref{G-sys-0}, we refer the reader to \cite{Kubo}. We notice here that, thanks to the above works, the situation is understood regarding the derivation of the curve describing the threshold between the blow-up and global existence regions for the   solutions to \eqref{G-sys-0}. More precisely, the critical (in the sense of interface between blow-up and global existence) curve for $p,q$ is given by
\begin{equation}\label{crit-curve-0}
\Upsilon(N,p,q):=\max (\Lambda(N,p,q), \Lambda(N,q,p))=0,
\end{equation}
where 
\begin{equation}\label{Lambda}
\d \Lambda(N,p,q):= \frac{p+1}{pq-1}-\frac{N-1}{2}.
\end{equation}
Under some assumptions, the solution $(u,v)$ of \eqref{G-sys-0} blows up in finite time $T(\e)$ for small initial data (of size $\e$),  namely 
\begin{equation}\label{Teps}
T(\e) \le \left\{
\begin{array}{lll}
C \e^{-\Upsilon(N,p,q)}& \text{if}&\Upsilon(N,p,q)>0,\\
\exp(C \e^{-(pq-1)})& \text{if}&\Upsilon(N,p,q)=0, \ p \neq q,\\
\exp(C \e^{-(p-1)})& \text{if}&\Upsilon(N,p,q)=0, \ p = q.
\end{array}
\right.
\end{equation}

Recently, Palmieri and Takamura \cite{Palmieri-Takamura} proved a nice result on the blow-up of the solution of a weakly coupled system of semilinear damped wave equations of derivative type. They  consider nonnegative and summable coefficients  in the damping terms (the {\it scattering} case). More precisely, using the multiplier's technique and taking advantage of the fact that the multipliers in this case are bounded, the authors in \cite{Palmieri-Takamura} prove that for the solutions of the following system:
\begin{align}\label{G-sys-b}
\begin{cases} u_{tt}-\Delta u +b_1(t)u_t=|\partial_t v|^p, &  x\in \mathbb{R}^N, \ t>0,\\
 v_{tt}-\Delta v +b_2(t)v_t=|\partial_t u|^q, &  x\in \R^N, \ t>0,\\
u(x,0)=\e f_1(x), \ \ v(x,0)=\e f_2(x), & x\in \mathbb{R}^N, \\ u_t(x,0)=\e g_1(x), \ \  v_t(x,0)=\varepsilon g_2(x), & x\in \mathbb{R}^N,
\end{cases}
\end{align}
the $(p,q)-$critical curve interestingly remains unchanged in the scattering case. Moreover, it is proven that the solutions of \eqref{G-sys-b}  blow up in finite time, and the blow-up time is  accordingly given by \eqref{Teps}. Although the result obtained in \cite{Palmieri-Takamura} is the same as for the system \eqref{G-sys-0}, but, the situation for the damped system studied in \cite{Palmieri-Takamura} is much more difficult in view of the coupling by means of two nonlinearities of derivative type. \\

In the context of the present work and to the best of our knowledge the only result on the blow-up of the weakly damped system in the scale-invariant case, \eqref{G-sys}, that we found in the literature is due to Palmieri and Tu who  proved  \cite{Palmieri}, among many other interesting results, a blow-up result for a system similar to \eqref{G-sys}. Indeed, the authors in \cite{Palmieri} studied \eqref{G-sys} in a more general setting, namely by adding two mass terms, which make the analysis somehow more delicate. 

More precisely, the authors in \cite{Palmieri} proved that there is blow-up for the system  \eqref{G-sys} for $p,q$  satisfying
\begin{equation}\label{blow-up-reg}
\Omega(N,\sigma(\mu_1),\sigma(\mu_2),p,q):=\max (\Lambda(N+\sigma(\mu_1),p,q), \Lambda(N+\sigma(\mu_2),q,p)) \ge 0,
\end{equation}
where $\Lambda$ is given by \eqref{Lambda} and $\sigma(\mu_i), i=1,2$ is given by  \eqref{sigma}.\\
Moreover, the solution $(u,v)$ of \eqref{G-sys} blows up in finite time $T(\e)$ for small initial data (of size $\e$),  namely 
\begin{equation}\label{Teps}
T(\e) \le \left\{
\begin{array}{lll}
C \e^{-\Omega(N,\sigma(\mu_1),\sigma(\mu_2),p,q)}& \text{if}&\Omega(N,\sigma(\mu_1),\sigma(\mu_2),p,q)>0,\\
\exp(C \e^{-(pq-1)})& \text{if}&\Omega(N,\sigma(\mu_1),\sigma(\mu_2),p,q)=0, \\
\exp(C \e^{-\min \left(\frac{pq-1}{p+1},\frac{pq-1}{q+1}\right)})& \text{if}&\Lambda(N+\sigma(\mu_1),p,q)=\Lambda(N+\sigma(\mu_2),q,p)=0.
\end{array}
\right.
\end{equation}

However, we will compare here our result to the one in \cite{Palmieri} by simply omitting the mass terms.

The emphasis in our  work is the study of the Cauchy problem (\ref{G-sys})  and the influence of the parameter $\mu_1, \mu_2$ on the blow-up result and the lifespan estimate. 

Therefore, thanks to a better comprehension of the role of the weak damping term in (\ref{G-sys}) in the dynamics together with a  deeper understanding of the corresponding linear problem inherited from the techniques developed in \cite{Our, Our2}, we will  improve the bound of the blow-up region.  Indeed, the result on the blow-up region, defined by \eqref{blow-up-reg} and obtained in \cite{Palmieri},   is improved here, under some assumptions to be announced in our main result, as follows:
\begin{equation}\label{blow-up-reg-imp}
\Omega(N,\mu_1,\mu_2,p,q)=\max (\Lambda(N+\mu_1,p,q), \Lambda(N+\mu_2,q,p)) \ge 0,
\end{equation}
where $\Lambda$ is given by \eqref{Lambda}.\\


The  article is organized as follows. We start in Section \ref{sec-main} by introducing  the weak formulation of (\ref{G-sys}) in the energy space. Then, we state the main theorem of our work.  In Section \ref{aux} we prove some technical lemmas that we will use, among other tools,  to conclude  the proof of the main result which is contained in Section \ref{sec-ut}.

\section{Main Result}\label{sec-main}
\par

This section is devoted to the statement of our main result. For that purpose, we first start by giving the definition of the solution of (\ref{G-sys}) in the corresponding energy space. More precisely, the weak formulation associated with  (\ref{G-sys}) reads  as follows:
\begin{Def}\label{def1}
 We say that $(u,v)$ is an energy  solution of
 (\ref{G-sys}) on $[0,T)$
if
\begin{displaymath}
\left\{
\begin{array}{l}
u,v\in \mathcal{C}([0,T),H^1(\R^N))\cap \mathcal{C}^1([0,T),L^2(\R^N)), \vspace{.1cm}\\
  \ u_t \in L^q_{loc}((0,T)\times \R^N), \ v_t \in L^p_{loc}((0,T)\times \R^N)
 \end{array}
  \right.
\end{displaymath}
satisfies, for all $\Phi, \tilde{\Phi} \in \mathcal{C}_0^{\infty}(\R^N\times[0,T))$ and all $t\in[0,T)$, the following equations:
\begin{equation}
\label{energysol2}
\begin{array}{l}
\d\int_{\R^N}u_t(x,t)\Phi(x,t)dx-\int_{\R^N}u_t(x,0)\Phi(x,0)dx \vspace{.2cm}\\
\d -\int_0^t  \int_{\R^N}u_t(x,s)\Phi_t(x,s)dx \,ds+\int_0^t  \int_{\R^N}\nabla u(x,s)\cdot\nabla\Phi(x,s) dx \,ds\vspace{.2cm}\\
\d  +\int_0^t  \int_{\R^N}\frac{\mu_1}{1+s}u_t(x,s) \Phi(x,s)dx \,ds=\int_0^t \int_{\R^N}|v_t(x,s)|^p\Phi(x,s)dx \,ds,
\end{array}
\end{equation}
and
\begin{equation}
\label{energysol3}
\begin{array}{l}
\d\int_{\R^N}v_t(x,t)\tilde{\Phi}(x,t)dx-\int_{\R^N}v_t(x,0)\tilde{\Phi}(x,0)dx \vspace{.2cm}\\
\d -\int_0^t  \int_{\R^N}v_t(x,s)\tilde{\Phi}_t(x,s)dx \,ds+\int_0^t  \int_{\R^N}\nabla v(x,s)\cdot\nabla\tilde{\Phi}(x,s) dx \,ds\vspace{.2cm}\\
\d  +\int_0^t  \int_{\R^N}\frac{\mu_2}{1+s}v_t(x,s) \tilde{\Phi}(x,s)dx \,ds=\int_0^t \int_{\R^N}|u_t(x,s)|^q\tilde{\Phi}(x,s)dx \,ds.
\end{array}
\end{equation}
\end{Def}

The following theorem states the  main result of this article.
\begin{theo}
\label{blowup}
Let $p, q >1$ and $\mu_1, \mu_2 >0$  such that 
\begin{equation}\label{assump}
\Omega(N,\mu_1,\mu_2,p,q) \ge 0,
\end{equation}
where the expression of $\Omega$ is given by \eqref{blow-up-reg}.\\
Assume that  $f_1, f_2\in H^1(\R^N)$ and $g_1, g_2 \in L^2(\R^N)$ are non-negative functions which are compactly supported on  $B_{\R^N}(0,R)$,
and  do not vanish everywhere.
Let $(u,v)$ be an energy solution of \eqref{energysol2}-\eqref{energysol3} on $[0,T_\e)$ such that $\mbox{\rm supp}(u), {\rm supp}(v)\ \subset\{(x,t)\in\R^N\times[0,\infty): |x|\le t+R\}$. 
Then, there exists a constant $\e_0=\e_0(f_1, f_2,g_1, g_2,N,R,p,q,\mu_1,\mu_2)>0$
such that $T_\e$ verifies
\begin{equation}\label{Teps1}
T(\e) \le \left\{
\begin{array}{lll}
C \e^{-\Omega(N,\mu_1,\mu_2,p,q)}& \text{if}&\Omega(N,\mu_1,\mu_2,p,q)>0,\\
\exp(C \e^{-(pq-1)})& \text{if}&\Omega(N,\mu_1,\mu_2,p,q)=0, \\
\exp(C \e^{-\min \left(\frac{pq-1}{p+1},\frac{pq-1}{q+1}\right)})& \text{if}&\Lambda(N+\mu_1,p,q)=\Lambda(N+\mu_2,q,p)=0.
\end{array}
\right.
\end{equation}
 where $C$ is a positive constant independent of $\e$ and $0<\e\le\e_0$.
\end{theo}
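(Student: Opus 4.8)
The plan is to run the test-function (weak-formulation) method together with a slicing/iteration argument, adapted to the derivative-type nonlinearities and to the scale-invariant damping. The entire improvement from $\sigma(\mu_i)$ to $\mu_i$ is encoded in the choice of multiplier: rather than the cruder weights underlying \eqref{blow-up-reg}, I would test the identities \eqref{energysol2}--\eqref{energysol3} against functions built from the Yordanov--Zhang eigenfunction of the Laplacian combined with a sharp time weight attached to the damping. Concretely, set $\phi(x)=\int_{S^{N-1}}e^{x\cdot\omega}\,d\omega$, so that $\Delta\phi=\phi$ and $\phi(x)\sim|x|^{-(N-1)/2}e^{|x|}$ as $|x|\to\infty$, and look for test functions of the form $\psi_i(x,t)=\rho_i(t)\phi(x)$, $i=1,2$.

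The time weights $\rho_i$ are chosen so that $\psi_i$ solves (to leading order) the adjoint of the $i$-th linear operator $\partial_t^2-\Delta+\frac{\mu_i}{1+t}\partial_t$. Using $\Delta\phi=\phi$, this reduces to the scalar ODE $\rho_i''-\frac{\mu_i}{1+t}\rho_i'-\rho_i+\frac{\mu_i}{(1+t)^2}\rho_i=0$, whose integrable (decaying) solution has the asymptotics $\rho_i(t)\sim(1+t)^{\mu_i/2}e^{-t}$; these are exactly the weights studied via modified Bessel functions in \cite{Our,Our2}. The gain then becomes visible in the basic weight integral: since $\int_{|x|\le t+R}\phi(x)\,dx\sim(t+R)^{(N-1)/2}e^{t+R}$, the exponential in $\rho_i$ cancels the one produced by $\phi$, leaving $\int_{|x|\le t+R}\psi_i(x,t)\,dx\lesssim(1+t)^{(N-1+\mu_i)/2}$. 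This polynomial factor, carrying the \emph{shifted} dimension $N+\mu_i$, is precisely what replaces $N+\sigma(\mu_i)$.

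With these multipliers I would set $F(t)=\int_{\R^N}u_t(x,t)\psi_1(x,t)\,dx$ and $G(t)=\int_{\R^N}v_t(x,t)\psi_2(x,t)\,dx$ (possibly corrected by lower-order terms to secure monotonicity), and first show they are positive and bounded below for $t$ bounded away from $0$, using the non-negativity and non-triviality of $f_i,g_i$ together with the finite propagation speed hypothesis on $\mathrm{supp}(u),\mathrm{supp}(v)$. Inserting $\psi_1$ into \eqref{energysol2} and integrating by parts in $x$ (via $\Delta\phi=\phi$) and in $t$ (via the ODE for $\rho_1$), the linear part telescopes and leaves an identity whose right-hand side is $\int_0^t\int|v_t|^p\psi_1$. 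A Hölder inequality in $x$, with the weight integral above playing the role of the volume factor, bounds this below by $c\int_0^t(1+s)^{-a}G(s)^p\,ds$, the extra time power $a$ absorbing the mismatch between $\psi_1$ and $\psi_2$ (which differ only by the explicit factor $\rho_1/\rho_2$); symmetrically for $G$. This produces the coupled system of nonlinear integral inequalities
\begin{equation*}
F(t)\ge c\int_0^t(1+s)^{-a}\,G(s)^p\,ds,\qquad G(t)\ge c\int_0^t(1+s)^{-b}\,F(s)^q\,ds,
\end{equation*}
with $a,b$ explicit in $N,\mu_1,\mu_2,p,q$.

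The final step is a standard but delicate iteration: feeding each inequality into the other yields lower bounds $F(t)\ge C_j(1+t)^{\alpha_j}\bigl(\log(1+t)\bigr)^{\beta_j}$ with recursively defined exponents. The recursion for $\alpha_j$ diverges exactly when $\Omega(N,\mu_1,\mu_2,p,q)>0$, forcing $F$ to become infinite at a finite time, and tracking the $\e$-dependence of the starting constant yields $T(\e)\le C\e^{-\Omega}$. When $\Omega=0$ the power iteration saturates and one instead accumulates logarithmic factors; controlling these through a Kato-type lemma for coupled systems gives $\exp(C\e^{-(pq-1)})$, and in the doubly critical case $\Lambda(N+\mu_1,p,q)=\Lambda(N+\mu_2,q,p)=0$ the sharper $\exp(C\e^{-\min(\frac{pq-1}{p+1},\frac{pq-1}{q+1})})$. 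I expect the main obstacle to be twofold: first, establishing the sharp asymptotics and sign/monotonicity of $\rho_i$ so that the telescoping in Step 3 is genuine rather than approximate (the correction $\frac{\mu_i}{(1+t)^2}\rho_i$ and the boundary terms must be controlled); and second, the bookkeeping of the coupled iteration, in particular the clean separation of the three critical regimes, where the precise powers of $\log$ decide the lifespan exponent.
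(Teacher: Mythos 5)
Your proposal follows essentially the same route as the paper: the same test functions $\psi_i(x,t)=\rho_i(t)\phi(x)$ with $\phi$ the Yordanov--Zhang eigenfunction and $\rho_i$ the exact modified-Bessel solution of the adjoint ODE \eqref{lambda} (so the telescoping you worry about is genuine, not approximate), the same $\e$-size coercive lower bounds for the functionals $\int u_t\psi_1\,dx$ and $\int v_t\psi_2\,dx$, the same H\"older step yielding coupled integral inequalities carrying the shifted dimensions $N+\mu_i$, and the same iteration scheme for the three lifespan regimes. The only difference is bookkeeping: the paper establishes the coercivity through auxiliary multiplier lemmas (Lemmas \ref{F1-old}--\ref{F11}) and then reduces \eqref{integ-ineq}--\eqref{integ-ineq2} verbatim to the iteration of \cite{Palmieri}, rather than re-running the exponent recursion from scratch as you sketch.
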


\begin{rem}
We notice here that Theorem \ref{blowup} shows that the critical curve for $p,q$ is in fact a shift of the dimension by $\mu_1, \mu_2$, respectively, in  \eqref{crit-curve-0}. We believe that this new blow-up region delimitation coincides with the critical one. Of course one has to rigorously confirm this assertion by proving a global existence result in the complementary region. This will be the subject of a forthcoming work.
\end{rem}

\begin{rem}
The result in 
Theorem \ref{blowup}  holds true if we replace the linear damping term in \eqref{G-sys},  $\frac{\mu_1}{1+t} u_t$ (resp. $\frac{\mu_2}{1+t} v_t$), by $b_1(t)u_t$ (resp. $b_2(t)v_t$) with $[b_i(t)-\mu_i (1+t)^{-1}]; i=1,2,$ belongs to $L^1(0,\infty)$. The proof of this  generalized damping case can be obtained by following the same steps as in the proof of Theorem \ref{blowup}   with the necessary modifications.
\end{rem}

\begin{rem}
Note that the blow-up result \eqref{Teps1} in Theorem \ref{blowup} remains true for the solutions of the system \eqref{G-sys-b} with e.g. $b_1(t)u_t=\frac{\mu_1}{1+t} u_t$ and $b_2(t)v_t$ is such that $[b_2(t)-\mu_2(1+t)^{-1}]$ belongs to $L^1(0,\infty)$. It suffices to set in this case $\mu_2=0$  in \eqref{Teps1}. The proof can be carried out  by combining the computations in the present article and  \cite{Palmieri-Takamura}.
\end{rem}

\section{Some auxiliary results}\label{aux}
\par

We define the following positive test function  
\begin{equation}
\label{test11}
\psi_i(x,t):=\rho_i(t)\phi(x), \ i=1,2, 
\end{equation}
where
\begin{equation}
\label{test12}
\phi(x):=
\left\{
\begin{array}{ll}
\d\int_{S^{N-1}}e^{x\cdot\omega}d\omega & \mbox{for}\ N\ge2,\vspace{.2cm}\\
e^x+e^{-x} & \mbox{for}\  N=1.
\end{array}
\right.
\end{equation}
Note that the function $\phi(x)$ is introduced in \cite{YZ06}   and $\rho_i(t)$, \cite{Palmieri1,Palmieri-Tu,Tu-Lin1,Tu-Lin},   is solution of 
\begin{equation}\label{lambda}
\frac{d^2 \rho_i(t)}{dt^2}-\rho_i(t)-\frac{d}{dt}\left(\frac{\mu_i}{1+t}\rho_i(t)\right)=0, \ i=1,2.
\end{equation}
The expression of  $\rho_i(t)$ reads as follows (see the Appendix for more details):
\begin{equation}\label{lmabdaK}
\rho_i(t)=(t+1)^{\frac{\mu_i+1}{2}}K_{\frac{\mu_i-1}2}(t+1), \ i=1,2,
\end{equation}
where 
$$K_{\nu}(t)=\int_0^\infty\exp(-t\cosh \zeta)\cosh(\nu \zeta)d\zeta,\ \nu\in \mathbb{R}.$$
Moreover, the function $\phi(x)$ verifies
\begin{equation*}\label{phi-pp}
\Delta\phi=\phi.
\end{equation*}
Note that the function $\psi_i(x, t)$ satisfies the corresponding conjugate equation, namely we have
\begin{equation}\label{lambda-eq}
\partial^2_t \psi_i(x, t)-\Delta \psi_i(x, t) -\frac{\partial}{\partial t}\left(\frac{\mu_i}{1+t}\psi_i(x, t)\right)=0.
\end{equation}

Throughout this article, we will denote by $C$  a generic positive constant which may depend on the data ($p,q,\mu_i,N,R,f_i,g_i$)$_{i=1,2}$ but not on $\ep$ and whose the value may change from line to line. Nevertheless, we will precise the dependence of the constant $C$ on the parameters of the problem when it is necessary.\\

The following lemma  holds true for the function $\psi_i(x, t)$.
\begin{lem}[\cite{YZ06}]
\label{lem1} Let  $r>1$.
Then, there exists a constant $C=C(N,R,r)>0$ such that
\begin{equation}
\label{psi}
\int_{|x|\leq t+R}\Big(\psi_i(x,t)\Big)^{r}dx
\leq C\rho_i^r(t)e^{rt}(1+t)^{\frac{(2-r)(N-1)}{2}}, \ i=1,2, 
\quad\forall \ t\ge0.
\end{equation}
\end{lem}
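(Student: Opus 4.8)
The plan is to exploit the product structure $\psi_i(x,t)=\rho_i(t)\phi(x)$, which cleanly separates the temporal and spatial variables. Since $\rho_i(t)\ge0$, we have $\big(\psi_i(x,t)\big)^r=\rho_i^r(t)\,\phi^r(x)$, so $\rho_i^r(t)$ factors out of the spatial integral and the entire assertion collapses to the single spatial bound
\begin{equation*}
\int_{|x|\le t+R}\phi^r(x)\,dx\le C\,e^{rt}(1+t)^{\frac{(2-r)(N-1)}{2}}.
\end{equation*}
In particular the estimate is the same for $i=1$ and $i=2$ and does not see $\mu_i$; only the geometry of $\phi$ enters.

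First I would record the pointwise behaviour of the Yordanov--Zhang function $\phi$. Applying the Laplace method to $\phi(x)=\int_{S^{N-1}}e^{x\cdot\omega}\,d\omega$, whose phase $x\cdot\omega$ is maximal at $\omega=x/|x|$, one obtains $\phi(x)\sim c_N|x|^{-\frac{N-1}{2}}e^{|x|}$ as $|x|\to\infty$, together with boundedness near the origin; hence there is $C=C(N)$ with $\phi(x)\le C(1+|x|)^{-\frac{N-1}{2}}e^{|x|}$ for all $x$. This is precisely the standard property established in \cite{YZ06}, so I would cite it rather than reprove it. The case $N=1$ is elementary, since $\phi(x)=e^{x}+e^{-x}\le 2e^{|x|}$ and the exponent $\frac{N-1}{2}$ vanishes.

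Next I would pass to polar coordinates and reduce to a one-dimensional radial integral. Using $\lambda^{N-1}\le(1+\lambda)^{N-1}$ the integrand is controlled by $(1+\lambda)^{\alpha}e^{r\lambda}$ with
\begin{equation*}
\alpha:=(N-1)-\frac{r(N-1)}{2}=\frac{(2-r)(N-1)}{2},
\end{equation*}
which is exactly the target exponent, so it remains to show $\int_0^{t+R}(1+\lambda)^{\alpha}e^{r\lambda}\,d\lambda\le C(1+t)^{\alpha}e^{rt}$. Because of the weight $e^{r\lambda}$, the mass of this integral concentrates at the upper endpoint $\lambda\approx t+R$, which is what produces the factor $(1+t)^{\alpha}e^{rt}$ (the factor $e^{rR}$ and the lower-order terms being absorbed into $C$).

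The hard part, and essentially the only genuine computation, is this endpoint analysis, which I would carry out uniformly in the sign of $\alpha$. When $\alpha\ge0$ (that is $1<r\le2$) a single integration by parts gives $\int_0^{t+R}(1+\lambda)^{\alpha}e^{r\lambda}\,d\lambda\le r^{-1}(1+t+R)^{\alpha}e^{r(t+R)}$ directly. When $\alpha<0$ (that is $r>2$) the naive bound $(1+\lambda)^{\alpha}\le1$ is too lossy; instead I would substitute $\lambda=(t+R)-s$ and split the resulting integral $e^{r(t+R)}\int_0^{t+R}(1+t+R-s)^{\alpha}e^{-rs}\,ds$ at $s=(t+R)/2$, estimating $(1+t+R-s)^{\alpha}\le C(1+t)^{\alpha}$ on the first half and absorbing the second half into an exponentially small remainder. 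Either way one lands on $C(1+t)^{\alpha}e^{rt}$, and restoring the factor $\rho_i^r(t)$ completes the proof. This sign-of-$\alpha$ dichotomy is the step most likely to conceal an error if handled carelessly.
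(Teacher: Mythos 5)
Your proof is correct and follows essentially the same route as the source: the paper offers no proof of its own but cites \cite{YZ06}, whose standard argument is precisely your reduction via $\psi_i^r=\rho_i^r\phi^r$, the pointwise bound $\phi(x)\le C(1+|x|)^{-\frac{N-1}{2}}e^{|x|}$, and the radial integral estimate $\int_0^{t+R}(1+\lambda)^{\frac{(2-r)(N-1)}{2}}e^{r\lambda}\,d\lambda\le C(1+t)^{\frac{(2-r)(N-1)}{2}}e^{rt}$. Your endpoint analysis, including the split at $s=(t+R)/2$ for the case $r>2$, is sound, so nothing is missing.
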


Now, we introduce the following functionals.
\begin{equation}
\label{F1def-old}
F_1(t):=e^{-t}\int_{\R^N}u(x, t)\phi(x)dx, \quad F_2(t):=e^{-t}\int_{\R^N}v(x, t)\phi(x)dx,
\end{equation}
and
\begin{equation}
\label{F2def-old}
\tilde{F}_1(t):=e^{-t}\int_{\R^N}\p_tu(x, t)\phi(x)dx, \quad \tilde{F}_2(t):=e^{-t}\int_{\R^N}\p_tv(x, t)\phi(x)dx.
\end{equation}
The next two lemmas give the first  lower bounds for $F_i(t)$ and $\tilde{F}_i(t)$, \ i=1,2, respectively.
\begin{lem}
\label{F1-old}
Assume that the assumptions in Theorem \ref{blowup} hold. Then, for $ i=1,2$, we have
\begin{equation}
\label{F1postive-old}
F_i(t)\ge \frac{\e}{2 m_i(t)}\int_{\R^N}f_i(x)\phi(x)dx, 
\quad\text{for all}\ t \in [0,T),
\end{equation}
where $m_i(t):=(1+t)^{\mu_i}$.
\end{lem}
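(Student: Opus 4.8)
The plan is to collapse the spatial dependence and reduce \eqref{F1postive-old} to a scalar second–order ordinary differential inequality. Writing $G_1(t):=\int_{\R^N}u(x,t)\phi(x)\,dx$ (and $G_2$ with $v$ in place of $u$), so that $F_i(t)=e^{-t}G_i(t)$ by \eqref{F1def-old}, I would insert the time–independent test function $\Phi(x,s)=\phi(x)$ into the weak formulation \eqref{energysol2}, after multiplying $\phi$ by a spatial cut-off equal to $1$ on $\{|x|\le s+R\}$ and letting the cut-off tend to $1$ (legitimate since $\operatorname{supp}u\subset\{|x|\le t+R\}$). The $\Phi_t$ term drops out, one integration by parts in space together with $\Delta\phi=\phi$ turns $\int\nabla u\cdot\nabla\phi$ into $-G_i$, and differentiating the resulting identity in $t$ gives
\begin{equation*}
G_i''(t)+\frac{\mu_i}{1+t}G_i'(t)-G_i(t)=R_i(t)\ge 0,\qquad R_1(t):=\int_{\R^N}|\pa_t v|^{p}\phi\,dx,
\end{equation*}
the source being nonnegative because $\phi>0$ (and $R_2:=\int_{\R^N}|\pa_t u|^{q}\phi\,dx$).

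Next I would exploit the integrating factor $m_i(t)=(1+t)^{\mu_i}$, for which $m_i'=\frac{\mu_i}{1+t}m_i$, to recast the inequality as
\begin{equation*}
\big(m_i(t)G_i'(t)\big)'=m_i(t)\,G_i(t)+m_i(t)\,R_i(t).
\end{equation*}
Because $f_i,g_i\ge0$ and $\phi>0$, the data $G_i(0)=\e\int f_i\phi>0$ and $(m_iG_i')(0)=\e\int g_i\phi\ge0$ are nonnegative, so a bootstrap on the maximal interval where $G_i\ge0$ shows that $m_iG_i'$ is nondecreasing; hence $G_i'\ge0$ and $G_i(t)\ge G_i(0)=\e\int f_i\phi>0$ for every $t$. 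This crude positivity is the seed for the sharp estimate.

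To upgrade it to the exponential lower bound in \eqref{F1postive-old} I would compare $G_i$ with the explicit profile $z_i(t):=e^{t}(1+t)^{-\mu_i}$, chosen so that $e^{-t}z_i=1/m_i$ is exactly the weight on the right of \eqref{F1postive-old}. A direct computation gives
\begin{equation*}
\mathcal{L}_i z_i:=z_i''+\frac{\mu_i}{1+t}z_i'-z_i=-\frac{\mu_i\,t}{(1+t)^2}\,z_i\le0,
\end{equation*}
so $z_i$ is a supersolution of the homogeneous operator. Setting $d_i:=G_i-\tfrac{\e}{2}\big(\int f_i\phi\big)z_i$, one finds $\mathcal{L}_i d_i=R_i+\tfrac{\e}{2}\big(\int f_i\phi\big)\frac{\mu_i t}{(1+t)^2}z_i\ge0$ with $d_i(0)=\tfrac{\e}{2}\int f_i\phi>0$. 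Establishing $d_i\ge0$ then gives $G_i(t)\ge\tfrac{\e}{2}\big(\int f_i\phi\big)z_i(t)$, which is precisely \eqref{F1postive-old}; the factor $\tfrac12$ is the slack that keeps $d_i(0)>0$ and must absorb the possibly negative initial slope $d_i'(0)=\e\big(\int g_i\phi-\tfrac12(1-\mu_i)\int f_i\phi\big)$.

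The positivity of $d_i$ is the main obstacle, and it is genuinely delicate: the operator $\mathcal{L}_i=\pa_t^2+\frac{\mu_i}{1+t}\pa_t-1$ has zeroth–order coefficient $-1<0$ and therefore admits no maximum principle — a negative interior minimum of $d_i$ is not excluded by $\mathcal{L}_i d_i\ge0$ alone — and when $\mu_i<1$ the slope $d_i'(0)$ may indeed be negative, so a bare monotonicity argument fails for small $t$. The mechanism that actually forces $d_i\ge0$ is the exponential growth of the increasing fundamental solution of $\mathcal{L}_i$, which I would make explicit through the solutions of \eqref{lambda}: the change of variables $s=1+t$, $G=s^{(1-\mu_i)/2}y(s)$ turns $\mathcal{L}_i h=0$ into the modified Bessel equation of order $\tfrac{\mu_i-1}{2}$, so a basis is $s^{(1-\mu_i)/2}I_{(\mu_i-1)/2}(s)$ and $s^{(1-\mu_i)/2}K_{(\mu_i-1)/2}(s)=(1+t)^{-\mu_i}\rho_i(t)$. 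Representing $G_i$ by variation of parameters against this positive basis, the contribution of the nonnegative forcing $R_i$ is nonnegative, and the asymptotics $I_\nu(s)\sim e^{s}/\sqrt{2\pi s}$, $K_\nu(s)\sim\sqrt{\pi/2s}\,e^{-s}$ together with the nonnegativity of the data isolate growth of order $e^{t}(1+t)^{-\mu_i/2}$, which dominates the weaker weight $e^{t}(1+t)^{-\mu_i}$ of \eqref{F1postive-old} since $\mu_i>0$; fixing the multiplicative constant from $G_i(0)=\e\int f_i\phi$ then produces the coefficient $\tfrac12$. Pinning the Bessel constants precisely so that this coefficient comes out exactly is the delicate bookkeeping, and is where the Appendix computation of $\rho_i$ is used.
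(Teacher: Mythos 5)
Your reduction to the ODE inequality $G_i''+\frac{\mu_i}{1+t}G_i'-G_i=R_i\ge 0$, the positivity bootstrap with the multiplier $m_i$, and the computation $\mathcal{L}_iz_i=-\frac{\mu_i t}{(1+t)^2}z_i\le 0$ are all correct, and you have correctly diagnosed the obstruction (no maximum principle for $\mathcal{L}_i$, possibly negative $d_i'(0)$ when $\mu_i<1$). But the proof stops exactly at the step that matters: the positivity $d_i\ge 0$ is never established. The Bessel/variation-of-parameters paragraph is a sketch that, even if carried out, only yields what the asymptotics $I_\nu(s)\sim e^s/\sqrt{2\pi s}$, $K_\nu(s)\sim\sqrt{\pi/(2s)}\,e^{-s}$ can give: a bound $G_i(t)\ge c(\mu_i)\,\e\,e^t(1+t)^{-\mu_i/2}$ for $t\ge T^*(\mu_i)$, with an uncontrolled constant and threshold. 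That does not prove \eqref{F1postive-old} on all of $[0,T)$, and the intermediate range cannot be absorbed by your crude bound $G_i(t)\ge G_i(0)$, since that only dominates $\frac{\e}{2}(\int f_i\phi)z_i(t)$ where $z_i(t)\le 2$, while $z_i\to\infty$. Moreover, the coefficient of the decaying basis element $s^{(1-\mu_i)/2}K_{(\mu_i-1)/2}(s)$ in the homogeneous part has no definite sign under the hypotheses, so ``nonnegativity of the data'' does not isolate the growing mode at finite times; and there is no mechanism in the asymptotics that produces the constant $\tfrac12$ ``exactly.'' This is a genuine gap, not bookkeeping.

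The paper itself does not spell the proof out — it invokes the multiplier technique of \cite{LT3} — and in that approach the missing step is closed in two lines, with no Bessel functions and with the constant $\tfrac12$ valid for all $t\ge 0$. Once you know $G_i\ge 0$ (your step 2), set $P_i(t):=m_i(t)e^{-t}\bigl(G_i'(t)+G_i(t)\bigr)$ and compute
\begin{equation*}
P_i'(t)=m_i(t)e^{-t}\Bigl(R_i(t)+\frac{\mu_i}{1+t}\,G_i(t)\Bigr)\ge 0,
\end{equation*}
so $P_i(t)\ge P_i(0)=\e\int_{\R^N}(f_i+g_i)\phi\,dx\ge \e\int_{\R^N} f_i\phi\,dx$ (this is where $g_i\ge0$ enters). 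Hence $(e^tG_i)'=e^t(G_i'+G_i)\ge \e\bigl(\int f_i\phi\bigr)e^{2t}(1+t)^{-\mu_i}$, and integrating on $(0,t)$ with $\int_0^t e^{2s}(1+s)^{-\mu_i}ds\ge \frac{e^{2t}-1}{2(1+t)^{\mu_i}}$ and $1\ge \frac{1}{2}(1+t)^{-\mu_i}$ gives $e^tG_i(t)\ge \frac{\e}{2}\bigl(\int f_i\phi\bigr)e^{2t}(1+t)^{-\mu_i}$, i.e.\ exactly \eqref{F1postive-old}. In effect, the monotone quantity $P_i$ replaces the maximum principle you lacked; your supersolution $z_i$ is a repackaging of the same weight $e^t m_i^{-1}$, but without the monotonicity identity it cannot be closed.
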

\begin{lem}
\label{F11-old}
Under the same assumptions of Theorem \ref{blowup}, it holds that
\begin{equation}
\label{F2postive-old}
\tilde{F}_i(t)\ge \frac{\e}{2m_i(t)}\int_{\R^N}g_i(x)\phi(x)dx, \ i=1,2, 
\quad\text{for all}\ t \in [0,T).
\end{equation}
\end{lem}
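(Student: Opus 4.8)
The plan is to convert the spatially averaged equation into a scalar first-order differential inequality in $t$ and integrate it with the integrating factor dictated by the scale-invariant damping. I would first introduce $G_1(t):=\int_{\R^N}\partial_t u(x,t)\,\phi(x)\,dx$ (and symmetrically $G_2$ built from $\partial_t v$), so that $\tilde F_i(t)=e^{-t}G_i(t)$. Using the weak formulation \eqref{energysol2} with the choice $\Phi=\phi$ — or, to stay inside the admissible class $\mathcal C_0^\infty$, a cut-off of $\phi$ to the cone $\{|x|\le t+R\}$ — and then exploiting $\Delta\phi=\phi$ together with an integration by parts whose boundary terms vanish because $\mathrm{supp}\,u\subset\{|x|\le t+R\}$, I arrive at
\[
G_1'(t)+\frac{\mu_1}{1+t}\,G_1(t)=\int_{\R^N}u(x,t)\,\phi(x)\,dx+\int_{\R^N}|\partial_t v(x,t)|^{p}\,\phi(x)\,dx .
\]
The right-hand side is nonnegative: the source term is nonnegative by inspection, and $\int_{\R^N}u\,\phi\,dx=e^{t}F_1(t)\ge0$ is exactly the positivity delivered by Lemma \ref{F1-old}.

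The next step is to multiply by $m_1(t)=(1+t)^{\mu_1}$, which is precisely the factor that turns the left-hand side into a total derivative, yielding $\frac{d}{dt}\big(m_1(t)G_1(t)\big)\ge 0$. Hence $m_1(t)G_1(t)$ is nondecreasing and therefore bounded below by its initial value
\[
m_1(0)G_1(0)=\int_{\R^N}\partial_t u(x,0)\,\phi(x)\,dx=\e\int_{\R^N}g_1(x)\,\phi(x)\,dx ,
\]
which gives the lower bound $\int_{\R^N}\partial_t u\,\phi\,dx\ge \e\,m_1(t)^{-1}\int_{\R^N}g_1\,\phi\,dx$. The estimate for $i=2$ follows verbatim after the substitution $(u,p,\mu_1,g_1)\mapsto(v,q,\mu_2,g_2)$.

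The step I expect to demand the most care is reconciling this clean bound on $\int u_t\,\phi\,dx$ with the exponential weight built into $\tilde F_i=e^{-t}\int u_t\,\phi\,dx$: the bare monotonicity argument only yields $\tilde F_i(t)\ge \e\,e^{-t}m_i(t)^{-1}\int g_i\,\phi\,dx$, which carries a spurious factor $e^{-t}$ relative to the stated rate $m_i(t)^{-1}$. The mechanism that removes it is already present in the identity above: by the quantitative form of Lemma \ref{F1-old} the term $\int u\,\phi\,dx=e^{t}F_i(t)$ does not merely stay nonnegative but grows like $e^{t}m_i(t)^{-1}$, so inserting that growth on the right of $\frac{d}{dt}\big(m_i(t)G_i(t)\big)\ge m_i(t)\int u\,\phi\,dx$ and integrating shows that $m_i(t)G_i(t)$ itself grows like $e^{t}$; the weight $e^{-t}$ is then exactly compensated and the decay $m_i(t)^{-1}$ survives. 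The only genuinely delicate point beyond this is that every manipulation — differentiation under the integral sign, the vanishing of boundary terms, and the integration by parts — must be justified for energy solutions in the sense of Definition \ref{def1} rather than for classical ones, which is why I would anchor the argument to the weak formulation \eqref{energysol2}–\eqref{energysol3} and to the positivity supplied by Lemma \ref{F1-old} instead of to any pointwise sign of $u$.
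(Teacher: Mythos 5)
Your strategy is exactly the one the paper intends: the paper gives no proof of Lemma \ref{F11-old} at all, referring instead to the multiplier technique of \cite{LT3} (see also \cite{LT2,Palmieri-Takamura}), and your reduction --- the identity
\begin{equation*}
U'(t)+\frac{\mu_1}{1+t}\,U(t)=\int_{\R^N}u(x,t)\phi(x)\,dx+\int_{\R^N}|\partial_t v(x,t)|^{p}\phi(x)\,dx,
\qquad U(t):=\int_{\R^N}\partial_t u(x,t)\,\phi(x)\,dx,
\end{equation*}
obtained from \eqref{energysol2} with a cut-off of $\phi$ on the light cone, followed by multiplication by the unbounded multiplier $m_1(t)=(1+t)^{\mu_1}$ --- is precisely that technique, and you correctly diagnose that the bare monotonicity of $m_1U$ leaves a spurious $e^{-t}$ that must be absorbed by the growth of $\int u\phi\,dx$. (Your notation $G_1$ clashes with the paper's $G_1(t)=\int u\,\psi_1\,dx$ from \eqref{F1def}, but that is cosmetic.)

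However, the compensation step as you run it does not deliver the constant asserted in \eqref{F2postive-old}. Inserting Lemma \ref{F1-old}, i.e.\ $\int u\phi\,dx=e^tF_1(t)\ge\frac{\e\,e^t}{2m_1(t)}\int f_1\phi\,dx$, into $\frac{d}{dt}\bigl(m_1U\bigr)\ge m_1\int u\phi\,dx$ and integrating gives
\begin{equation*}
\tilde F_1(t)\ \ge\ \frac{\e}{m_1(t)}\Bigl[e^{-t}\int_{\R^N} g_1\phi\,dx+\frac{1-e^{-t}}{2}\int_{\R^N} f_1\phi\,dx\Bigr]\ \ge\ \frac{\e}{m_1(t)}\,\min\Bigl(\int_{\R^N} g_1\phi\,dx,\ \frac12\int_{\R^N} f_1\phi\,dx\Bigr),
\end{equation*}
which has the stated decay $m_1(t)^{-1}$ but a constant that also depends on $f_1$; whenever $\int f_1\phi\,dx<\int g_1\phi\,dx$ this falls short of the claimed $\frac{\e}{2m_1(t)}\int g_1\phi\,dx$ for large $t$, so the lemma as stated is not proved. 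The repair stays entirely inside your framework but routes the feedback through $g_1$ rather than $f_1$: your first step already gives $U(t)\ge\e C_{g_1}/m_1(t)$ with $C_{g_1}:=\int g_1\phi\,dx$; hence $\int u\phi\,dx\ge\e C_{g_1}\int_0^t m_1(s)^{-1}ds$, and feeding this back into $\frac{d}{dt}(m_1U)\ge m_1\int u\phi\,dx$ and iterating yields $m_1(t)U(t)\ge\e C_{g_1}a(t)$, where $a$ solves $\bigl(a'/m_1\bigr)'=a/m_1$ with $a(0)=1$, $a'(0)=0$. Writing this as $a''=a+\frac{\mu_1}{1+t}a'$ with $a'\ge0$, Duhamel against $\cosh$ gives $a(t)-\cosh t=\int_0^t\sinh(t-s)\frac{\mu_1}{1+s}a'(s)\,ds\ge0$, so $a(t)\ge\cosh t\ge\frac12 e^{t}$, which is exactly $\tilde F_1(t)\ge\frac{\e}{2m_1(t)}C_{g_1}$, using positivity of $\int u\phi\,dx$ but no quantitative input from $f_1$. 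Finally, be aware that the paper invokes Lemma \ref{F11-old} only to conclude $\tilde G_1(t)\ge0$ in the proof of Lemma \ref{F11}, so your weaker constant would suffice for the main theorem --- but not for the statement under review.
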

The proofs of the two above lemmas are based on the multiplier's technique introduced in \cite{LT3} and used in several works; see e.g.  \cite{Our,Our2,LT,LT2,Palmieri-Takamura}. Note that the multipliers $(m_i(t))_{i=1,2}$  are not bounded.
\begin{rem}
We notice that the lower bounds obtained  in Lemmas \ref{F1-old} and \ref{F11-old} are not optimal, however, the results there are sufficient  since we only need the positivity of $F_i(t)$ and $\tilde{F}_i(t), \ i=1,2$, for all $t>0$, in the proof of our main result. Nevertheless, we will instead introduce new functionals for which we aim to obtain better lower bounds. This will be the subject of the next two lemmas.
\end{rem}

\par
We define now the functionals that we will use to prove the blow-up criteria later on:
\begin{equation}
\label{F1def}
\G (t):=\int_{\R^N}u(x, t)\psi_1(x, t)dx, \quad \H (t):=\int_{\R^N}v(x, t)\psi_2(x, t)dx,
\end{equation}
and
\begin{equation}
\label{F2def}
\GG (t):=\int_{\R^N}\p_tu(x, t)\psi_1(x, t)dx, \quad \HH (t):=\int_{\R^N}\p_t v(x, t)\psi_2(x, t)dx.
\end{equation}
The next two lemmas give the first  lower bounds for $G_i(t)$ and $\tilde{G}_i(t)$ ($i=1,2$), respectively. More precisely, we will prove that the functions $G_i(t)$ and $\tilde{G}_i(t)$ are  {\it coercive}. This is the first observation which will be used to improve the main result of this article. Indeed, in comparison with \eqref{F1postive-old} and \eqref{F2postive-old},  we obtain in the subsequent better lower bounds for the functionals $G_1(t)$ and $\tilde{G}_1(t)$; see \eqref{F1postive} and \eqref{F2postive} below, passing thus from the size $\e / (1+t)^{\mu_i/2}$ to the size $\e$.

 Although the techniques used here are similar to the ones in our previous work \cite{Our2} in the case of a one single equation, but, the situation is slightly different for the system \eqref{G-sys}. So, we will include  all the details about the proofs of the next two lemmas. However, we will only show the proofs for the solution $u$, and for $v$ the computations follow similarly.  
\begin{lem}
\label{F1}
Assume that the assumptions in Theorem \ref{blowup} hold. Let $(u,v)$ be an energy solution of \eqref{energysol2}-\eqref{energysol3}. Then, for $i=1,2$, there exists $T_0=T_0(\mu_1, \mu_2)>1$ such that 
\begin{equation}
\label{F1postive}
G_i(t)\ge C_{G_i}\, \e, 
\quad\text{for all}\ t \ge T_0,
\end{equation}
where $C_{G_i}$ is a positive constant which depends on $f_i$, $g_i$, $N,R$ and $\mu_i$.
\end{lem}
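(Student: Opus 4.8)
The plan is to show that $\G(t)=\int_{\R^N}u\,\psi_1\,dx$ satisfies a first order linear differential inequality
\[
\G'(t)-a(t)\,\G(t)\ge c_1\,\e,\qquad a(t):=2\frac{\rho_1'(t)}{\rho_1(t)}-\frac{\mu_1}{1+t},
\]
in which the forcing term is bounded below by a fixed positive multiple of $\e$, and then to integrate it with the factor $\exp(-\int a)$. The point is that $a(t)\to-2$, so the ODE has a stable equilibrium of size $\e$; the decaying bound $\e/(1+t)^{\mu_1/2}$ coming from Lemma \ref{F1-old} is thereby upgraded to the constant size $\e$. The value $T_0$ appears precisely as the time after which the coefficient $-a(t)$ stays bounded between two positive constants.

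First I would produce the driving monotone quantity. Consider the Wronskian-type functional
\[
\mathcal{J}_1(t):=\int_{\R^N}\big(u_t\psi_1-u\,\p_t\psi_1\big)\,dx+\int_{\R^N}\frac{\mu_1}{1+t}\,u\,\psi_1\,dx .
\]
Differentiating it and inserting $u_{tt}=\Delta u-\frac{\mu_1}{1+t}u_t+|\p_t v|^p$ together with the conjugate equation \eqref{lambda-eq} for $\psi_1$, and using $\int\Delta u\,\psi_1=\int u\,\Delta\psi_1$ (the boundary terms vanish since $u$ is supported in $|x|\le t+R$), the two damping contributions combine into a single exact derivative and cancel, leaving
\[
\frac{d}{dt}\mathcal{J}_1(t)=\int_{\R^N}|\p_t v|^p\,\psi_1\,dx\ge0 .
\]
Since $\p_t\psi_1=\rho_1'\phi$ gives $\int u\,\p_t\psi_1=\frac{\rho_1'}{\rho_1}\G$ and $\GG=\G'-\frac{\rho_1'}{\rho_1}\G$, one checks $\mathcal{J}_1(t)=\G'(t)-a(t)\,\G(t)$. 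Hence monotonicity propagates the bound $\mathcal{J}_1(t)\ge\mathcal{J}_1(0)$ to all $t$. All of this is carried out on the weak formulation \eqref{energysol2}, with $\psi_1$ truncated to the light cone used as test function, exactly as in Lemmas \ref{F1-old} and \ref{F11-old}.

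Next I would evaluate $\mathcal{J}_1(0)$. As $\G(0)=\e\rho_1(0)\int f_1\phi$ and $\GG(0)=\e\rho_1(0)\int g_1\phi$, we get $\mathcal{J}_1(0)=\e\big[\rho_1(0)\int g_1\phi+(\mu_1\rho_1(0)-\rho_1'(0))\int f_1\phi\big]$. Using the representation \eqref{lmabdaK} with $\nu=\frac{\mu_1-1}{2}$ and the recurrence $K_\nu'=-K_{\nu+1}+\frac{\nu}{s}K_\nu$, a direct computation gives $\rho_1'(t)=\frac{\mu_1}{1+t}\rho_1(t)-(1+t)^{\frac{\mu_1+1}{2}}K_{\frac{\mu_1+1}{2}}(1+t)$, so that $\mu_1\rho_1(0)-\rho_1'(0)=K_{\frac{\mu_1+1}{2}}(1)>0$. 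Since $f_1,g_1\ge0$ are nontrivial and $\phi>0$, this yields $\mathcal{J}_1(0)=c_1\e$ with $c_1>0$, hence $\mathcal{J}_1(t)\ge c_1\e$ for every $t\ge0$.

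Finally, the same computation gives $a(t)=\frac{\mu_1}{1+t}-2\,K_{\frac{\mu_1+1}{2}}(1+t)/K_{\frac{\mu_1-1}{2}}(1+t)$; since $K_{\nu+1}/K_\nu\to1$ while $\frac{\mu_1}{1+t}\to0$, there exist $T_0>1$ and constants $0<c\le C$ with $c\le-a(t)\le C$ for all $t\ge T_0$. Setting $E(t):=\exp\!\big(-\int_{T_0}^t a(s)\,ds\big)$, the inequality $\G'-a\,\G=\mathcal{J}_1\ge c_1\e$ gives $(\G E)'\ge c_1\e\,E$, and integrating from $T_0$ to $t$ with $\G(T_0)\ge0$ (inherited from Lemma \ref{F1-old} via $\G=\rho_1e^tF_1$) yields $\G(t)E(t)\ge c_1\e\int_{T_0}^t E(s)\,ds$. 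Because $-a\le C$ forces $E(s)\ge E(t)e^{-C(t-s)}$, the last integral is at least a fixed fraction of $E(t)$ once $t\ge T_0+1$, whence $\G(t)\ge C_{G_1}\e$; enlarging $T_0$ covers $[T_0,\infty)$. The case of $\H$ is identical after exchanging $(\mu_1,u,v,p)$ with $(\mu_2,v,u,q)$. The main obstacle is the two-sided control $c\le-a(t)\le C$, which rests on the monotonicity and large-argument asymptotics of the Bessel ratio $K_{\nu+1}/K_\nu$, together with the rigorous justification at the energy-solution level of the differentiated identity for $\mathcal{J}_1$, the test function $\psi_1$ being supported only on the light cone of $u$.
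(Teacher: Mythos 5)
Your proof is correct and takes essentially the same route as the paper: your monotone functional $\mathcal{J}_1(t)$ is precisely the left-hand side of the paper's identity \eqref{eq5}, so your inequality $G_1'(t)-a(t)G_1(t)=\mathcal{J}_1(t)\ge\mathcal{J}_1(0)=\e\,C(f_1,g_1)$ is the paper's \eqref{eq6} with the nonnegative nonlinear term dropped, your computation $\mu_1\rho_1(0)-\rho_1'(0)=K_{\frac{\mu_1+1}{2}}(1)>0$ matches the paper's positivity argument for $C(f_1,g_1)$ via \eqref{lambda'lambda}, and your integrating factor $E(t)=\exp\big(\int_{T_0}^t\Gamma_1(s)\,ds\big)$ coincides up to a constant with the paper's $(1+t)^{\mu_1}/\rho_1^2(t)$. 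The only cosmetic deviations are that you integrate from $T_0$ (using $G_1(T_0)\ge0$ from Lemma \ref{F1-old}) rather than from $0$, and that you close with two-sided constant bounds on $\Gamma_1(t)=-a(t)\to2$ instead of the paper's explicit Bessel estimates \eqref{est-double} applied on $(t/2,t)$ --- both equally valid.
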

\begin{proof} 
As mentioned before, we will prove the lemma for $u$. The proof for $v$ is similar. \\
Let $t \in [0,T)$, then using Definition \ref{def1} and  performing an integration by parts in space in the fourth term in the left-hand side of \eqref{energysol2}, we obtain
\begin{equation}\label{eq3}
\begin{array}{l}
\d \int_{\R^N}u_t(x,t)\Phi(x,t)dx
-\e\int_{\R^N}g_1(x)\Phi(x,0)dx \vspace{.2cm}\\
\d-\int_0^t\int_{\R^N}\left\{
u_t(x,s)\Phi_t(x,s)+u(x,s)\Delta\Phi(x,s)\right\}dx \, ds +\int_0^t  \int_{\R^N}\frac{\mu_1}{1+s}u_t(x,s) \Phi(x,s)dx \,ds\vspace{.2cm}\\
\d=\int_0^t\int_{\R^N}|v_t(x,s)|^p\Phi(x,s)dx \, ds, \quad \forall \ \Phi\in \mathcal{C}_0^{\infty}(\R^N\times[0,T)).
\end{array}
\end{equation}
Now, substituting in \eqref{eq3} $\Phi(x, t)$ by $\psi_1(x, t)$, we infer that
\begin{equation}
\begin{array}{l}\label{eq4}
\d \int_{\R^N}u_t(x,t)\psi_1(x,t)dx
-\e\int_{\R^N}g_1(x)\psi_1(x,0)dx \vspace{.2cm}\\
\d-\int_0^t\int_{\R^N}\left\{
u_t(x,s) \partial_t \psi_{1}(x,s)+u(x,s)\Delta\psi_1(x,s)\right\}dx \, ds +\int_0^t  \int_{\R^N}\frac{\mu_1}{1+s}u_t(x,s) \psi_1(x,s)dx \,ds\vspace{.2cm}\\
\d=\int_0^t\int_{\R^N}|v_t(x,s)|^p\psi_1(x,s)dx \, ds.
\end{array}
\end{equation}
Performing an integration by parts for the first and third terms in the second line of \eqref{eq4} and utilizing \eqref{test11} and \eqref{lambda-eq}, we obtain
\begin{equation}
\begin{array}{l}\label{eq5}
\d \int_{\R^N}\big[u_t(x,t)\psi_1(x,t)- u(x,t)\partial_t \psi_{1}(x,t)+\frac{\mu_1}{1+t}u(x,t) \psi_1(x,t)\big]dx
\vspace{.2cm}\\
\d=\int_0^t\int_{\R^N}|v_t(x,s)|^p\psi_1(x,s)dx \, ds 
+\d \e \, C(f_1,g_1),
\end{array}
\end{equation}
where 
\begin{equation}\label{Cfg}
C(f_1,g_1):=\rho_1(0)\int_{\R^N}\big[\big(\mu_1-\frac{\rho_1'(0)}{\rho_1(0)}\big)f_1(x)\phi(x)+g_1(x)\phi(x)\big]dx.
\end{equation}
We notice  that the constant $C(f_1,g_1)$ is positive thanks to \eqref{lambda'lambda} and the fact that the function $K_{\nu}(t)$ is positive (see \eqref{Kmu} in the Appendix). \\
Hence, using the definition of $G_1$, as in \eqref{F1def},  and \eqref{test11}, the equation  \eqref{eq5} yields
\begin{equation}
\begin{array}{l}\label{eq6}
\d G_1'(t)+\Gamma_1(t)G_1(t)=\int_0^t\int_{\R^N}|v_t(x,s)|^p\psi_1(x,s)dx \, ds +\e \, C(f_1,g_1),
\end{array}
\end{equation}
where 
\begin{equation}\label{gamma}
\Gamma_1(t):=\frac{\mu_1}{1+t}-2\frac{\rho_1'(t)}{\rho_1(t)}.
\end{equation}
Multiplying  \eqref{eq6} by $\frac{(1+t)^{\mu_1}}{\rho_1^2(t)}$ and integrating over $(0,t)$, we deduce  that
\begin{align}\label{est-G1}
 G_1(t)
\ge G_1(0)\frac{\rho_1^2(t)}{(1+t)^{\mu_1}}+{\e}C(f_1,g_1)\frac{\rho_1^2(t)}{(1+t)^{\mu_1}}\int_0^t\frac{(1+s)^{\mu_1}}{\rho_1^2(s)}ds.
\end{align}
Using \eqref{lmabdaK} and  the fact that $G_1(0)>0$, the estimate \eqref{est-G1} yields
\begin{align}\label{est-G1-1}
 G_1(t)
\ge {\e}C(f_1,g_1)(1+t)K^2_{\frac{\mu_1-1}2}(t+1)\int^t_{t/2}\frac{1}{(1+s)K^2_{\frac{\mu_1-1}2}(s+1)}ds.
\end{align}
From \eqref{Kmu}, we have the existence of $T^1_0=T^1_0(\mu_1)>1$ such that 
\begin{align}\label{est-double}
(1+t)K^2_{\frac{\mu_1-1}2}(t+1)>\frac{\pi}{4} e^{-2(t+1)} \quad \text{and}  \quad (1+t)^{-1}K^{-2}_{\frac{\mu_1-1}2}(t+1)>\frac{1}{\pi} e^{2(t+1)}, \ \forall \ t \ge T^1_0/2.
\end{align}
Hence, we have
\begin{align}\label{est-G1-2}
 G_1(t)
\ge \frac{\e}{4}C(f_1,g_1)e^{-2t}\int^t_{t/2}e^{2s}ds\ge \frac{\e}{8}C(f_1,g_1)e^{-2t}(e^{2t}-e^{t}), \ \forall \ t \ge T^1_0.
\end{align}
Finally, using $e^{2t}>2e^{t}, \forall \ t \ge 1$, we deduce that
\begin{align}\label{est-G1-3}
 G_1(t)
\ge \frac{\e}{16}C(f_1,g_1), \ \forall \ t \ge T^1_0.
\end{align}
Hence, similarly for $v$ we have the existence of $T^2_0=T^2_0(\mu_2)>1$. Finally, to conclude we take $T_0=\max(T^1_0,T^2_0)$. This ends the proof of Lemma \ref{F1}.
\end{proof}

Now we are in position to prove  the following lemma.
\begin{lem}\label{F11}
Assume that the assumptions in Theorem \ref{blowup} hold. Let $(u,v)$ be an energy solution of \eqref{energysol2}-\eqref{energysol3}. Then, for $i=1,2$, there exists $T_1=T_1(\mu_1, \mu_2)>1$ such that
\begin{equation}
\label{F2postive}
\tilde{G}_i(t)\ge C_{\tilde{G}_i}\, \e, 
\quad\text{for all}\ t  \ge  T_1,
\end{equation}
where $C_{\tilde{G}_i}$ is a positive constant which depends on $f_i$, $g_i$, $N, R$ and $\mu_i$.
\end{lem}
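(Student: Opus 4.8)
The plan is to mirror the proof of Lemma \ref{F1}: I will first turn $\GG(t)=\int_{\R^N}\partial_t u(x, t)\psi_1(x, t)\,dx$ into the solution of a first-order linear ODE whose source is bounded below by the coercivity of $\G$ already established in Lemma \ref{F1}, and then integrate this ODE against the right integrating factor. \emph{Step 1 (the ODE).} Proceeding as in the derivation of \eqref{eq6} from the weak formulation \eqref{energysol2}, but now differentiating $\GG$ and substituting the equation for $u_{tt}$, and using $\Delta\psi_1=\psi_1$, $\partial_t\psi_1=\frac{\rho_1'(t)}{\rho_1(t)}\psi_1$ together with the fact that $\rho_1$ solves \eqref{lambda}, I obtain
\[
\GG'(t)+\tilde\Gamma_1(t)\,\GG(t)=\G(t)+\int_{\R^N}|v_t(x,s)|^p\,\psi_1(x,t)\,dx,\qquad \tilde\Gamma_1(t):=\frac{\mu_1}{1+t}-\frac{\rho_1'(t)}{\rho_1(t)}.
\]
The role of \eqref{lambda} is precisely to make the coefficient of $\G(t)$ on the right-hand side equal to $+1$. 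Since the remaining integral is nonnegative and, by Lemma \ref{F1}, $\G(t)\ge C_{\G}\,\e$ for $t\ge T_0$, the right-hand side is $\ge C_{\G}\,\e$ on $[T_0,\infty)$.

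\emph{Step 2 (integration).} The integrating factor attached to $\tilde\Gamma_1$ is $\frac{(1+t)^{\mu_1}}{\rho_1(t)}$, so that $\frac{d}{dt}\big[\frac{(1+t)^{\mu_1}}{\rho_1(t)}\GG(t)\big]\ge C_{\G}\,\e\,\frac{(1+t)^{\mu_1}}{\rho_1(t)}$ for $t\ge T_0$. Integrating over $[T_0,t]$ and discarding the boundary contribution — which is nonnegative because $\GG(T_0)=e^{T_0}\rho_1(T_0)\tilde{F}_1(T_0)>0$ by Lemma \ref{F11-old} and $\rho_1>0$ — yields
\[
\GG(t)\ \ge\ C_{\G}\,\e\,\frac{\rho_1(t)}{(1+t)^{\mu_1}}\int_{T_0}^{t}\frac{(1+s)^{\mu_1}}{\rho_1(s)}\,ds.
\]

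\emph{Step 3 (asymptotics of $K_\nu$ and conclusion).} Inserting \eqref{lmabdaK} and using the large-argument behaviour of $K_{(\mu_1-1)/2}$ recalled in \eqref{Kmu}, I get, for $t,s$ large, $\rho_1(t)/(1+t)^{\mu_1}\ge c_1(1+t)^{-\mu_1/2}e^{-(t+1)}$ (from the lower bound on $K_\nu$) and $(1+s)^{\mu_1}/\rho_1(s)\ge c_2(1+s)^{\mu_1/2}e^{s+1}$ (from the upper bound on $K_\nu$). Restricting the integral to $[t/2,t]$ (legitimate once $t\ge 2T_0$) and bounding $(1+s)^{\mu_1/2}\ge(1+t/2)^{\mu_1/2}$, the product of the exponentially small prefactor with the integral is bounded below by a positive constant, whence $\GG(t)\ge C_{\GG}\,\e$ for all $t\ge T_1^1$ for some $T_1^1=T_1^1(\mu_1)>1$. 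The identical computation for $v$ produces $T_1^2=T_1^2(\mu_2)$, and one sets $T_1=\max(T_1^1,T_1^2)$.

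The main obstacle is that $\GG$ is \emph{not} controlled pointwise by $\G$: the explicit identity coming from \eqref{eq5} reads $\GG(t)=\big(\frac{\rho_1'}{\rho_1}-\frac{\mu_1}{1+t}\big)\G(t)+\cdots$, and the coefficient $\frac{\rho_1'}{\rho_1}-\frac{\mu_1}{1+t}$ is negative (asymptotically $\simeq -1$), so that term carries the wrong sign. The positivity must therefore be extracted through the integrating-factor step, where the genuine difficulty is to balance the exponentially decaying prefactor $\rho_1(t)/(1+t)^{\mu_1}$ against the exponentially growing integral; this forces the use of the sharp two-sided asymptotics of $K_\nu$ rather than the squared estimates \eqref{est-double} that sufficed for Lemma \ref{F1}.
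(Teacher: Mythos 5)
Your proposal is correct, and it reaches the lemma by a genuinely more direct route than the paper at the crucial step. Your Step 1 ODE is exactly the paper's identity \eqref{F1+bis2} (the paper obtains it rigorously by differentiating \eqref{eq5bis} in time rather than by substituting $u_{tt}$ pointwise, which is the safer derivation inside the energy-solution framework, but the resulting equation is the same, and your verification that \eqref{lambda} makes the coefficient of $\G$ equal to $+1$ is accurate). Where you diverge: you feed the coercivity $\G(t)\ge C_{G_1}\e$ from Lemma \ref{F1} directly into \eqref{F1+bis2} as a forcing term and integrate against the natural factor $(1+t)^{\mu_1}/\rho_1(t)$ (asymptotically $e^{t}$), whereas the paper keeps the damping coefficient $\tfrac{3\Gamma_1}{4}$ (asymptotically $\tfrac32$) and performs the decomposition $\Sigma_1^1+\Sigma_1^2+\Sigma_1^3$ of \eqref{sigma1-exp}--\eqref{sigma3-exp}, re-substituting \eqref{eq5bis} inside $\Sigma_1^1$ so that the source becomes $\tfrac{\e}{8}C(f_1,g_1)$ \emph{plus a quarter of the nonlinear memory term} $\int_0^t\int|v_t|^p\psi_1$, while Lemma \ref{F1} is used only for the sign of $\Sigma_1^2$. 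Your route is shorter and entirely sufficient for the lemma as stated; what the paper's heavier bookkeeping buys is the stronger differential inequality \eqref{G2+bis41}, which retains the nonlinear terms and is reused verbatim in Section \ref{sec-ut} to set up $L_1,L_2$ and the inequality \eqref{G2+bis6}, so with your version of the lemma one would have to re-derive that inequality separately before the iteration argument. Both proofs share the remaining skeleton: positivity of the boundary term via $\GG(t)=\rho_1(t)e^{t}\tilde F_1(t)\ge0$ from Lemma \ref{F11-old}, and the balance of the exponentially small prefactor against the exponentially growing integral over $[t/2,t]$. One cosmetic inaccuracy in your closing remark: the squared estimates \eqref{est-double} are in fact two-sided bounds on $K_{\frac{\mu_1-1}{2}}$, and taking square roots of them yields exactly the bounds $\rho_1(t)/(1+t)^{\mu_1}\gtrsim (1+t)^{-\mu_1/2}e^{-t}$ and $(1+s)^{\mu_1}/\rho_1(s)\gtrsim (1+s)^{\mu_1/2}e^{s}$ that you use, so no sharper asymptotics than \eqref{est-double} (itself a consequence of \eqref{Kmu}) are actually needed; this does not affect the correctness of your argument.
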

\begin{proof}
As  before, we will prove the lemma for $u$. The proof for $v$ is similar. \\
Let $t \in [0,T)$, then  using the definition of $G_1$ and  $\tilde{G}_1$, given respectively by \eqref{F1def} and  \eqref{F2def}, \eqref{test11} and the fact that
 \begin{equation}\label{def23}\d G_1'(t) -\frac{\rho_1'(t)}{\rho_1(t)}G_1(t)= \tilde{G}_1(t),\end{equation}
 the equation  \eqref{eq6} yields
\begin{equation}
\begin{array}{l}\label{eq5bis}
\d \tilde{G}_1(t)+\left(\frac{\mu_1}{1+t}-\frac{\rho_1'(t)}{\rho_1(t)}\right)G_1(t)\\
=\d \int_0^t\int_{\R^N}|v_t(x,s)|^p\psi_1(x,s)dx \, ds +\e \, C(f_1,g_1).
\end{array}
\end{equation}
Differentiating the  equation \eqref{eq5bis} in time, we get
\begin{equation}\label{F1+bis}
\begin{array}{l}
\d \tilde{G}_1'(t)+\left(\frac{\mu_1}{1+t}-\frac{\rho_1'(t)}{\rho_1(t)}\right)G'_1(t)-\left(\frac{\mu_1}{(1+t)^2}+\frac{\rho_1''(t)\rho_1(t)-(\rho_1'(t))^2}{\rho_1^2(t)}\right)G_1(t) \\
\d = \int_{\R^N}|v_t(x,t)|^p\psi_1(x,t)dx.
\end{array}
\end{equation}
Using  \eqref{lambda} and   \eqref{def23}, the identity \eqref{F1+bis} becomes
\begin{align}\label{F1+bis2}
\d \tilde{G}_1'(t)+\left(\frac{\mu_1}{1+t}-\frac{\rho_1'(t)}{\rho_1(t)}\right)\tilde{G}_1(t)-G_1(t) = \int_{\R^N}|v_t(x,t)|^p\psi_1(x,t)dx.
\end{align}
Remember the definition of $\Gamma_1(t)$, given by \eqref{gamma}, we obtain 
\begin{equation}\label{G2+bis3}
\begin{array}{c}
\d \tilde{G}_1'(t)+\frac{3\Gamma_1(t)}{4}\tilde{G}_1(t)\ge\Sigma_1^1(t)+\Sigma_1^2(t)+\Sigma_1^3(t),
\end{array}
\end{equation}
where 
\begin{equation}\label{sigma1-exp}
\begin{array}{rl}
\Sigma_1^1(t):=&\d \left(-\frac{\rho_1'(t)}{2\rho_1(t)}-\frac{\mu_1}{4(1+t)}\right)\left(\tilde{G}_1(t)+\left(\frac{\mu_1}{1+t}-\frac{\rho_1'(t)}{\rho_1(t)}\right)G_1(t)\right),
\end{array} 
\end{equation}
\begin{equation}\label{sigma2-exp}
\Sigma_1^2(t):=\d \left(1+\left(\frac{\rho_1'(t)}{2\rho_1(t)}+\frac{\mu_1}{4(1+t)}\right) \left(\frac{\mu_1}{1+t}-\frac{\rho_1'(t)}{\rho_1(t)}\right) \right)  G_1(t),
\end{equation}
and
\begin{equation}\label{sigma3-exp}
\Sigma_1^3(t):=  \int_{\R^N}|v_t(x,t)|^p\psi_1(x,t) dx.
\end{equation}
Now, from \eqref{eq5bis} and \eqref{lambda'lambda1}, we deduce that there exists $T^1_1=T^1_1(\mu_1) \ge T_0$ such that
\begin{equation}\label{sigma1}
\d \Sigma_1^1(t) \ge  \frac{\e}{8} \, C(f_1,g_1)+\frac{1}{4}\int_{0}^t \int_{\R^N}|v_t(x,s)|^p\psi_1(x,s)dx ds, \quad \forall \ t \ge T^1_1. 
\end{equation}
Moreover, form Lemma \ref{F1} and \eqref{lambda'lambda1}, we conclude the existence of $\tilde{T}^1_1=\tilde{T}^1_1(\mu_1) \ge T^1_1$ such that
\begin{equation}\label{sigma2}
\d \Sigma_1^2(t) \ge 0, \quad \forall \ t  \ge  \tilde{T}^1_1. 
\end{equation}
Combining \eqref{G2+bis3}, \eqref{sigma3-exp}, \eqref{sigma1} and \eqref{sigma2}, we obtain
\begin{equation}\label{G2+bis41}
\begin{array}{rcl}
\d \tilde{G}_1'(t)+\frac{3\Gamma_1(t)}{4}\tilde{G}_1(t) &\ge& \d \frac{\e}{8} \, C(f_1,g_1)+\frac{1}{4}\int_{0}^t \int_{\R^N}|v_t(x,s)|^p\psi_1(x,s)dx ds \\ &+&\d \int_{\R^N}|v_t(x,t)|^p\psi_1(x,t) dx, \quad \forall \ t  \ge  \tilde{\tilde{T}}^1_1.
\end{array}
\end{equation}
Ignoring the nonlinear terms yields
\begin{equation}\label{G2+bis4}
\begin{array}{rcl}
\d \tilde{G}_1'(t)+\frac{3\Gamma_1(t)}{4}\tilde{G}_1(t) &\ge& \d \frac{\e}{8} \, C(f_1,g_1), \quad \forall \ t  \ge  \tilde{\tilde{T}}^1_1.
\end{array}
\end{equation}
Multiplying  \eqref{G2+bis4} by $\frac{(1+t)^{3\mu_1/4}}{\rho_1^{3/2}(t)}$ and integrating over $(\tilde{\tilde{T}}^1_1,t)$, we deduce  that
\begin{align}\label{est-G111-new}
 \tilde{G}_1(t)
&\ge \tilde{G}_1(\tilde{\tilde{T}}^1_1)\frac{(1+\tilde{\tilde{T}}^1_1)^{3\mu_1/4}}{\rho_1^{3/2}(\tilde{\tilde{T}}^1_1)}\frac{\rho_1^{3/2}(t)}{(1+t)^{3\mu_1/4}}\\&+\frac{\e}{8} \, C(f_1,g_1)\frac{\rho_1^{3/2}(t)}{(1+t)^{3\mu_1/4}}\int_{\tilde{\tilde{T}}^1_1}^t\frac{(1+s)^{3\mu_1/4}}{\rho_1^{3/2}(s)}ds, \quad \forall \ t  \ge  \tilde{\tilde{T}}^1_1.\nonumber
\end{align}
Now, observe that $\tilde{G}_1(t)=\rho_1(t)e^{t}\tilde{F}_1(t)$ where $\tilde{F}_1(t)$ is given by \eqref{F2def-old}. Hence, using Lemma \ref{F11-old} we infer that $\tilde{G}_1(t) \ge 0$ for all $t \ge 0$.\\
Therefore, using the above observation and  \eqref{lmabdaK}, we deduce that 
\begin{align}\label{est-G1111}
 \tilde{G}_1(\tilde{\tilde{T}}^1_1)\frac{(1+\tilde{\tilde{T}}^1_1)^{3\mu_1/4}}{\rho_1^{3/2}(\tilde{\tilde{T}}^1_1)}\frac{\rho_1^{3/2}(t)}{(1+t)^{3\mu_1/4}}
\ge 0, \quad \forall \ t \ge 0.
\end{align}
Employing \eqref{est-double} and \eqref{est-G1111}, the estimate \eqref{est-G111-new} yields
\begin{equation}\label{est-G2-12}
 \tilde{G}_1(t)
\ge   C\,{\e}e^{-3t/2} \int^t_{t/2}e^{3s/2}ds, \quad \text{for all} \ t \ge T^1_1:=2\tilde{\tilde{T}}^1_1.
\end{equation}
Hence, we have
\begin{align}\label{est-G1-2}
 \tilde{G}_1(t)
\ge  C\,{\e}, \quad \forall \ t \ge T^1_1.
\end{align}
Hence, similarly for $v$ we have the existence of $T^2_1=T^2_1(\mu_2)>1$ and we take $T_1=\max(T^1_1,T^2_1)$. This concludes the proof of Lemma
\ref{F11}.
\end{proof}

\section{Proof of Theorem \ref{blowup}.}\label{sec-ut}

This section is devoted to the proof of Theorem \ref{blowup} which is somehow related to the obtaining  of the critical curve associated with the nonlinear  problem \eqref{G-sys}. First, we perform a better understanding of the linear  problem associated with \eqref{G-sys} and use the results  in Section \ref{aux}.  In fact, we proved in Lemma \ref{F11}  that $\tilde{G}_i(t)$ are coercive functions for $i=1,2$. This is a crucial observation that we will use to improve the blow-up result of  \eqref{G-sys}. Thanks to the observation described above and by introducing some new functionals $L_1(t)$ and $L_2(t)$ (see \eqref{L1} and \eqref{L2} below), which verify two integral inequalities similar to the ones in \cite[(25) and (26)]{Palmieri} with $\mu_1, \mu_2$ in the present work instead of $\sigma_1, \sigma_2$ in \cite{Palmieri}, we improve the blow-up result in \cite{Palmieri} for the solution of \eqref{G-sys}. The result of this work  makes the blow-up region for \eqref{G-sys} more precise. Our result for (\ref{G-sys}) enhances the corresponding one in  \cite{Palmieri} except if  $\mu_1 \ge 2$ and $\mu_2  \ge 2$ where the two results coincide.\\

Now, setting
\begin{equation}\label{L1}
L_1(t):=
\frac{1}{8}\int_{T_2}^t  \int_{\R^N}|v_t(x,s)|^p\psi_1(x,s)dx ds
+\frac{C_6 \e}{8},
\end{equation}
and
\begin{equation}\label{L2}
L_2(t):=
\frac{1}{8}\int_{T_2}^t  \int_{\R^N}|u_t(x,s)|^q\psi_2(x,s)dx ds
+\frac{C_6 \e}{8},
\end{equation}
where $C_6=\min(C(f_1,g_1),C(f_2,g_2),8C_{\tilde{G}_1},8C_{\tilde{G}_2})$ ($C_{\tilde{G}_1}$ and $C_{\tilde{G}_2}$ are defined in Lemmas \ref{F1} and \ref{F11}, respectively) and $T_2:=T_2(\mu_1,\mu_2)>T_1$ is chosen such that $\frac{1}{4}-\frac{3\Gamma_i(t)}{32}>0$ and $\Gamma_i(t)>0$, for i=1,2, for all $t \ge T_2$ (this is possible thanks to \eqref{gamma} and \eqref{lambda'lambda1}), and let
$$\mathcal{F}_i(t):= \tilde{G}_i(t)-L_i(t), \quad \forall \ i=1,2.$$
Hence, we have for $\mathcal{F}_1$,
\begin{equation}\label{G2+bis6}
\begin{array}{rcl}
\d \mathcal{F}_1'(t)+\frac{3\Gamma_1(t)}{4}\mathcal{F}_1(t) &\ge& \d \left(\frac{1}{4}-\frac{3\Gamma_1(t)}{32}\right)\int_{T_2}^t \int_{\R^N}|v_t(x,s)|^p\psi_1(x,s)dx ds\vspace{.2cm}\\ &+&  \d \frac{7}{8}\int_{\R^N}|v_t(x,t)|^p\psi_1(x,t) dx+C_6 \left(1-\frac{3\Gamma_1(t)}{32}\right) \e\\
&\ge&0, \quad \forall \ t \ge T_2.
\end{array}
\end{equation}
Multiplying  \eqref{G2+bis6} by $\frac{(1+t)^{3\mu_1/4}}{\rho_1^{3/2}(t)}$ and integrating over $(T_2,t)$, we deduce  that
\begin{align}\label{est-G111}
 \mathcal{F}_1(t)
\ge \mathcal{F}_1(T_2)\frac{(1+T_2)^{3\mu/4}}{\rho_1^{3/2}(T_2)}\frac{\rho_1^{3/2}(t)}{(1+t)^{3\mu_1/4}}, \ \forall \ t \ge T_2,
\end{align}
where $\rho_1(t)$ is defined by \eqref{lmabdaK}.\\
Therefore we have $\d \mathcal{F}_1(T_2)=\GG(T_2)-\frac{C_6 \e}{8} \ge \GG(T_2)-C_{\tilde{G}_1}\e \ge 0$ thanks to Lemma \ref{F11} and the fact that $C_6=\min(C(f_1,g_1),C(f_2,g_2),8C_{\tilde{G}_1},8C_{\tilde{G}_2}) \le 8C_{\tilde{G}_1}$. \\
Then, we have 
\begin{equation}
\label{G2-est}
\GG(t)\geq L_1(t), \ \forall \ t \ge T_2.
\end{equation}
Similarly, we have an analogous estimate for $\HH (t)$, namely
\begin{equation}
\label{G2-est-bis}
\HH(t)\geq L_2(t), \ \forall \ t \ge T_2.
\end{equation}
By H\"{o}lder's inequality and the estimates \eqref{psi} and \eqref{F2postive}, we can bound the nonlinear term as follows:
\begin{equation}\label{vt-pho}
\begin{array}{rcl}
\d \int_{\R^N}|v_t(x,t)|^p\psi_1(x,t)dx &\geq&\d \HH^p(t)\left(\int_{|x|\leq t+R}\psi_2^{\frac{p}{p-1}}(x,t)\psi_1^{\frac{-1}{p-1}}(x,t)dx\right)^{-(p-1)} \vspace{.2cm}\\ &\geq& C \HH^p(t) \rho_1(t)\rho_2^{-p}(t)e^{-(p-1)t}(1+t)^{-\frac{(N-1)(p-1)}2}.
\end{array}
\end{equation}
Using \eqref{lmabdaK} and \eqref{est-double}, we get
 \begin{equation}\label{pho-est}
 \d \rho_1(t)e^{t} \le C (1+t)^{\frac{\mu_1}{2}}, \ \forall \ t \ge T_0/2,
 \end{equation}
 and a similar estimate holds for $ \rho_2(t)$.\\
Hence, plugging  \eqref{pho-est} in \eqref{vt-pho}  yields  
\begin{equation}
\d \int_{\R^N}|v_t(x,t)|^p\psi_1(x,t)dx \geq C (1+t)^{-\frac{(N-1)}{2}(p-1)+ \frac{\mu_1}{2}-\frac{\mu_2}{2}p}\HH^p(t), \ \forall \ t \ge T_2.
\end{equation}
From the above estimate and \eqref{G2-est-bis}, we infer that
\begin{equation}
\label{inequalityfornonlinearin}
L_1'(t)\geq C (1+t)^{-\frac{(N-1)}{2}(p-1)+ \frac{\mu_1}{2}-\frac{\mu_2}{2}p}L_2^p(t), \quad \forall \ t \ge T_2.
\end{equation}
Similarly, we obtain an analogous estimate for $L_2'(t)$,
\begin{equation}
\label{inequalityfornonlinearin2}
L_2'(t)\geq C (1+t)^{-\frac{(N-1)}{2}(q-1)+ \frac{\mu_2}{2}-\frac{\mu_1}{2}q}L_1^q(t), \quad \forall \ t \ge T_2.
\end{equation}
Integrating \eqref{inequalityfornonlinearin} and \eqref{inequalityfornonlinearin2} on $(T_2, t)$, we obtain, respectively,
\begin{equation}
L_1(t)\geq \frac{C_6 \e}{8}+C_0 \int_{T_2}^t  (1+s)^{-\frac{(N-1)}{2}(p-1)+ \frac{\mu_1}{2}-\frac{\mu_2}{2}p}L_2^p(s) ds, \quad \forall \ t \ge T_2,
\end{equation}
and
\begin{equation}
L_2(t)\geq \frac{C_6 \e}{8}+C_0 \int_{T_2}^t  (1+s)^{-\frac{(N-1)}{2}(q-1)+ \frac{\mu_2}{2}-\frac{\mu_1}{2}q}L_1^q(s) ds, \quad \forall \ t \ge T_2.
\end{equation}
Using the fact that $\d \frac{1}{T_2}(T_2+s)\le 1+s \le T_2+s$ for all $s \in (T_2, t)$, because $T_2>1$, we deduce that
\begin{equation}
\label{integ-ineq}
L_1(t)\geq \frac{C_6 \e}{8}+C_1 \int_{T_2}^t  (T_2+s)^{-\frac{(N-1)}{2}(p-1)+ \frac{\mu_1}{2}-\frac{\mu_2}{2}p}L_2^p(s) ds, \quad \forall \ t \ge T_2,
\end{equation}
and
\begin{equation}
\label{integ-ineq2}
L_2(t)\geq \frac{C_6 \e}{8}+C_1 \int_{T_2}^t  (T_2+s)^{-\frac{(N-1)}{2}(q-1)+ \frac{\mu_2}{2}-\frac{\mu_1}{2}q}L_1^q(s) ds, \quad \forall \ t \ge T_2.
\end{equation}
Therefore, we end up with the same integral inequalities as in \cite{Palmieri}; here \eqref{integ-ineq} (resp. \eqref{integ-ineq2} corresponds to (25) (resp. (26)) in \cite{Palmieri}. From these two integral inequalities we mimic the same steps, line by line, in  \cite[Sections 4.2 and 4.3]{Palmieri} to prove the blow-up result with the iteration process applied to \eqref{integ-ineq}-\eqref{integ-ineq2}. However, we should take  in consideration the fact that in the present work the shift of the dimension $N$ is with  $\mu_i$ instead of $\sigma(\mu_i)$ in \cite{Palmieri}, where $\sigma(\mu_i)$ is defined by \eqref{sigma}.

\section{Appendix}
In this appendix, we will recall some properties of the function $\rho_i(t)$, for $i=1,2$, the solution of \eqref{lambda}. Hence, following the computations in \cite{Tu-Lin} (with $\eta=1)$, we can write the expression of  $\rho_i(t)$ as follows:
\begin{equation}\label{lmabdaK-A}
\rho_i(t)=(t+1)^{\frac{\mu_i+1}{2}}K_{\frac{\mu_i-1}2}(t+1), \ i=1,2,
\end{equation}
where 
$$K_{\nu}(t)=\int_0^\infty\exp(-t\cosh \zeta)\cosh(\nu \zeta)d\zeta,\ \nu\in \mathbb{R}.$$
Using the property of  $\rho_i(t)$ in the proof of Lemma 2.1 in \cite{Tu-Lin} (with $\eta=1)$, we infer that
\begin{equation}\label{lambda'lambda}
\frac{\rho_i'(t)}{\rho_i(t)}=\frac{\mu_i}{1+t}-\frac{K_{\frac{\mu_i+1}2}(t+1)}{K_{\frac{\mu_i-1}2}(t+1)}, \ i=1,2.
\end{equation}
From \cite{Gaunt}, we have the following property for the function $K_{\mu_i}(t)$, \ i=1,2,
\begin{equation}\label{Kmu}
K_{\mu_i}(t)=\sqrt{\frac{\pi}{2t}}e^{-t} (1+O(t^{-1}), \quad \text{as} \ t \to \infty.
\end{equation}
Combining \eqref{lambda'lambda} and \eqref{Kmu}, we infer that, for $\ i=1,2$,
\begin{equation}\label{lambda'lambda1}
\frac{\rho_i'(t)}{\rho_i(t)}=-1+O(t^{-1}), \quad \text{as} \ t \to \infty.
\end{equation}

Finally, we refer the reader to \cite{Erdelyi} for more details about the properties of the function $K_{\mu_i}(t)$.


\bibliographystyle{plain}

\end{document}